\begin{document}

\title{Model Predictive Tracking Control for Invariant Systems on Matrix Lie Groups via Stable Embedding into Euclidean Spaces}

\author{Dong Eui Chang, Karmvir Singh Phogat, and Jongeun Choi,~\IEEEmembership{Senior Member,~IEEE}
\thanks{Dong Eui Chang and Karmvir Singh Phogat are with the School of Electrical Engineering, KAIST, Daejeon, 34141, Korea.
        \tt{ dechang@kaist.ac.kr, karmvir.p@gmail.com
        }}
\thanks{Jongeun Choi is with the School of Mechanical Engineering, Yonsei University, 
           Seoul 03722, Korea.
	\tt{jongeunchoi@yonsei.ac.kr}}        

\thanks{This research has been in part supported by KAIST under grants G04170001 and N11180231 and   by   the Mid-career Research Program through the National Research Foundation of Korea  funded by the Ministry of Science and ICT under grant NRF-2018R1A2B6008063. (Corresponding Author: K.S. Phogat, Co-corresponding Author: J. Choi). (To be) Published in IEEE Transactions on Automatic Control. doi: 10.1109/TAC.2019.2946231}
}
\maketitle

\begin{abstract}
For controller design for systems  on manifolds embedded in  Euclidean space, it is convenient to utilize a theory that  requires a single global coordinate system on the ambient Euclidean space   rather than multiple local charts on the manifold or  coordinate-free tools from differential geometry. In this article, we apply such a theory  to design model predictive tracking  controllers for systems whose dynamics evolve on manifolds and illustrate its efficacy with the fully actuated rigid body attitude control system. 
\end{abstract}

\begin{IEEEkeywords}
Model predictive control,  Matrix Lie groups, Tracking control, Attitude control.
\end{IEEEkeywords}

%

\section{Introduction}

\IEEEPARstart{M}{odel} predictive control (MPC), which requires solving a constrained finite time-horizon optimal control problem, has been initially utilized mostly in slow  process industries \cite{borrelli2017predictive}. In contrast to conventional control schemes, MPC is prevalent in safety critical systems due to its ability to handle state and control constraints for large-scale systems \cite{borrelli,rawlings}. Due to the rise in computational power  and sophisticated algorithms, several successful MPC  implementations have recently been reported in various applications with fast dynamics, including autonomous vehicles \cite{Lee2013,huiping2017} and power electronics \cite{stellato}. Obviously, MPC designing strategies for continuous-time systems require linearized discrete-time systems, accounting for the system dynamics. Because linearization and discretization of the system dynamics are relatively daunting tasks on manifolds as compared to Euclidean spaces, designing MPC on manifolds is a nontrivial matter. First, a manifold cannot be entirely covered by one local coordinate chart unless it is diffeomorphic to a Euclidean space. As a result, one needs to  carry out coordinate changes when the system trajectory traverses through multiple charts. Second, linearization and discretization of system dynamics are both local approximations, so these procedures require use of local charts as well. In general, coordinate change is an expensive operation in terms of computation time, and it may introduce fairly large discontinuities to the dynamics due to switchings of local cost functions for MPC that are defined chartwise on the manifold. As many mechanical systems such as aerial vehicles and robotic systems evolve on manifolds, it is essential to have a theory that does not require switching of charts or unconventional tools from geometric control theory.  

 In this article, we propose a model predictive controller design for systems on manifolds by employing a stable embedding technique, which is summarized as follows: Given a control system on a manifold $M$, first embed the manifold into the Euclidean space $\mathbb R^n$ and extend the system dynamics stably to the ambient Euclidean space, i.e., the system dynamics  are extended on \m{\R^n} in such a way that the manifold \m{M} becomes an invariant attractor of the modified dynamics defined on \m{\R^n}. Since the system dynamics are now defined in $\mathbb R^n$, we can  carry out linearization in one single global Euclidean coordinate set for  $\mathbb R^n$ and then discretization of the linearized system in $\mathbb R^n$. The stable embedding technique increases the dimension of the system, and therefore MPC for the extended dynamics may be computationally more expensive than the dynamics in minimal coordinates; however, it simplifies linearization and discretization of the system dynamics to a large extent. This approach was successfully applied for linear stabilizing/tracking controller design \cite{Ch18} and structure-preserving numerical integration \cite{ChJiPe16}.

	 Recent attempts on MPC on  manifolds may be found in \cite{Kalabic2017, Andrew15b}, which require implicit representation of the system dynamics or explicit constraints in the optimization to preserve the manifold structure of \m{\SO}. In addition, these schemes require switching of charts as the local control law, which is needed for stability, is defined in charts. In contrast to these conventional schemes, we take the aforementioned stable embedding approach to design MPC for systems defined on manifolds. In our study, we consider a class of systems defined by fully actuated left invariant vector fields on matrix Lie groups and stably embed the system dynamics in Euclidean spaces. Subsequently, to track a reference trajectory, time-varying tracking error dynamics are defined in the ambient Euclidean space; those are linearized along the reference and simplified using symmetry invariance, both in one single global  coordinate system on the ambient Euclidean space, and the local stabilizability of the original nonlinear tracking error dynamics to zero  is then readily established. For applying MPC, the  error dynamics are linearized and discretized, and the stabilizability of the discrete-time linear error dynamics is also proven, for which a  fundamental sufficient condition is derived  in an inequality form that involves the two parameters: the  discretization time step and the transversal stability  parameter that is introduced in the process of stable embedding. Later, an MPC law is designed for the discrete-time linear error dynamics and is applied to the original nonlinear system. It is worth mentioning at this juncture that, to the best of the authors' knowledge, the issue of establishing exponential stability of the time-varying system dynamics under the synthesized MPC control law remains open. Some results on the stability of the sampled data systems may be found in \cite{nesic1999,nesic2004}. To demonstrate the efficacy of the proposed MPC technique, we design a tracking controller for spacecraft attitude control dynamics and conduct numerical studies for various real-time scenarios, such as reference tracking under tight control constraints and noisy measurements, to illustrate the validity of the designed controller. Numerical simulations show that the MPC tracking controller designed using the discrete-time linear system in the Euclidean space is robust to unmodeled disturbances and provides good tracking performance when applied to the actual nonlinear system.    
 
The structure of this article is as follows: We establish stabilizability of the tracking error dynamics for left invariant systems on matrix Lie groups and discuss the design procedure for the MPC controller in Section \ref{sec:MpcLie} . Section \ref{sec:RigidBody} is devoted to design of an MPC tracking controller for a rigid body attitude control system. Numerical studies of the designed tracking controller for attitude dynamics are in Section \ref{sec:Simulations}, followed by our conclusions in Section \ref{sec:Conclusion}. 
 
\section{MPC on Matrix Lie groups} \label{sec:MpcLie}
Let \m{\LieGrp} be a matrix Lie group of dimension $m$ with \m{\gI} as the group identity and \m{\LieAlg} be the Lie algebra of the Lie group \m{\LieGrp}.  Suppose that a controlled system dynamics on the matrix Lie group is given by a left invariant vector field: 
\begin{equation}\label{eq:DynLie}
\Sigma: \quad
\begin{cases}
\dot{\gst} = \gst \vst, \\
\dot{\vst} = f(\vst,u), 
\end{cases}
\end{equation}
where \m{\gst \in \LieGrp,\, \vst \in \LieAlg} and \m{u \in \R^m}. It is assumed that $\frac{\partial f}{\partial u}(\xi,u)$ is an invertible linear map from $\R^m$ to $\LieAlg$ for each $(\xi,u) \in \LieAlg \times \R^{m}$.
Suppose that the matrix Lie group \m{\LieGrp} is embedded into a Euclidean space \m{\Eg}. The vector space $\mathbb R^{n\times n}$ is split into two orthogonal subspaces \m{\LieAlg} and \m{\prp{\LieAlg}} such that \m{\Eg = \LieAlg \oplus \prp{\LieAlg}}, where \m{\prp{\LieAlg}} is the orthogonal complement of \m{\LieAlg} in \m{\Eg} under the Euclidean metric defined by $\langle A, B\rangle = \operatorname{trace}(A^TB)$ for all $A,B\in \mathbb R^{n\times n}$. In the subsequent discussion, we refer to \m{\LieAlg} as the parallel direction and \m{\prp{\LieAlg}} as the transversal direction, and we define orthogonal projection maps from the ambient space \m{\Eg} to \m{\LieAlg} and \m{\prp{\LieAlg}} as
\[
 \Eg \ni v \mapsto \prl{v} \in \LieAlg, \quad \Eg \ni v \mapsto \prp{v} \in \prp{\LieAlg}. 
\]
A detailed discussion on left invariant systems and the differential geometric tools employed in this article may be found in \cite{abraham,sastry,bloch}.  
 Let us turn to stably embed the system dynamics \eqref{eq:DynLie} into \m{\Eg} considering the following assumption: 
\begin{assumption}
\label{asm:V}
There exists a $C^2$ function 
\[
\Eg \ni x \mapsto V(x) \geq 0 \in \R
\]
with the following properties: 

\begin{enumerate}[label=\textup{(A-\roman*)}, leftmargin=*, widest=b, align=left]

\item \label{asm:1} \m{ V^{-1}(0)= \LieGrp}.
\item \label{asm:2} \m{V(\st \gst)= V(\st)} for all \m{\st \in \Eg, \gst \in \LieGrp.} 
\item \label{asm:3} \m{\nabla^2 V(\gI)} is positive definite in the transversal direction, i.e., \m{\nabla^2 V(\gI)\cdot (y,y)>0} for all \m{y \in \prp{\LieAlg}\backslash\{0\}}.
\end{enumerate} 
\end{assumption}
 Since $V$ attains its minimum value 0 at each point in $\LieGrp$, $\nabla V$ vanishes on $\LieGrp$. Hence, the system dynamics \eqref{eq:DynLie}  can be extended  to   the ambient Euclidean space \m{\Eg \times \LieAlg} as
\begin{equation} \label{eq:DynEg}
\tilde{\Sigma}: \quad
\begin{cases}
\dot{\st} = \st \vst - \alpha \nabla V(\st), \\
\dot{\vst} = f(\vst,u), 
\end{cases}
\end{equation}
where \m{\alpha>0,} \m{ x \in \Eg,\, \vst \in \LieAlg} and \m{u \in \R^m}. Let 
\begin{align}\label{eq:Ref}
\R \ni t \mapsto (\Rtraj{\gst}(t),\Rtraj{\vst}(t)) \in \LieGrp \times \LieAlg
\end{align}
be a reference state trajectory and \m{\R \ni t \mapsto \Rtraj{u}(t) \in \R^m} be the corresponding control trajectory of the system dynamics \eqref{eq:DynLie}. 
\begin{assumption}\label{asm:g0}
There exist constants \m{\gub \geq \glb >0} such that the reference trajectory 
\[
\R \ni t\mapsto \Rtraj{\gst}(t) \in \LieGrp
\]
satisfies 
\[
\glb I \mleq \Rtraj{\gst}(t)\bigl(\Rtraj{\gst}(t)\bigr)^\top \mleq \gub I \quad \text{for all } t.
\]
\end{assumption}
The tracking error trajectory, defined by 
\begin{align*}
\R \ni t & \mapsto \left(\erg(t), \era (t) \right)\\ 
& \Let \left(\st(t)\Rtraj{\gst}^{-1}(t) - \gI, \vst(t) - \Rtraj{\vst}(t)\right) \in \Eg\times\LieAlg
\end{align*}
such that \m{\left(\erg(t), \era(t) \right)=0} for all \m{t} ensures, that the system dynamics \eqref{eq:DynEg} is tracking the reference trajectory, i.e., \m{\st(t)=\Rtraj{\gst}(t), \vst(t) = \Rtraj{\vst}(t)} for all \m{t}. Therefore, the reference tracking problem is translated to stabilization of the error dynamics to zero. The error dynamics for a given reference trajectory is given as
\begin{equation}\label{eq:DynError}
\er \Sigma: \quad
\begin{cases}
\dot{\erg} = \big(\Rtraj{\gst} + \erg \Rtraj{\gst}\big)\era\Rtraj{\gst}^{-1} - \alpha \nabla V(\Rtraj{\gst} + \erg \Rtraj{\gst})\Rtraj{\gst}^{-1},\\
\dot{\era} = f(\era + \Rtraj{\vst},u) - f(\Rtraj{\vst},\Rtraj{u}).
\end{cases}
\end{equation}
To design a linear MPC controller, let us linearize the error dynamics \eqref{eq:DynError} around zero. The linearized error dynamics around zero is given by
\begin{equation}\label{eq:DynErrorLin}
\er \Sigma^\ell: \quad
\begin{cases}
\dot{\erg} = \Rtraj{\gst}\era\Rtraj{\gst}^{-1} - \alpha \bigl(\nabla^2 V(\Rtraj{\gst})\cdot(\erg \Rtraj{\gst}) \bigr) \Rtraj{\gst}^{-1},\\
\dot{\era} = \frac{\partial f}{\partial \vst} (\Rtraj{\vst},\Rtraj{u}) \era + \frac{\partial f}{\partial u} (\Rtraj{\vst},\Rtraj{u}) \eru,
\end{cases}
\end{equation}
where \m{\eru \Let u - \Rtraj{u}}.
Before simplifying the error dynamics \m{\dot{\erg}} in \eqref{eq:DynErrorLin} and splitting it further along the parallel and the transversal direction to gain more geometric insight, let us discuss some key properties associated with the function \m{V}:    
\begin{lemma}\label{lem:V_property} 
Under Assumption \ref{asm:V},  the following hold:
\begin{enumerate}[label=\textup{(\alph*)}, leftmargin=*, widest=b, align=left]
\item \m{\nabla^2 V(I) \cdot\prl{v} = 0} for all \m{\prl{v} \in \LieAlg}. \label{lm:1}
\item \m{\nabla^2 V(I) \cdot \prp{v} \in \prp{\LieAlg}} for all \m{\prp{v} \in \prp{\LieAlg}}. 
\item \m{\nabla^2 V(g)\cdot (v g) = \bigl(\nabla^2 V(\gI)\cdot v\bigr) \big(g^{-1}\big)^\top} for all \m{v \in \Eg} and \m{g\in\LieGrp}.
\end{enumerate}
\end{lemma}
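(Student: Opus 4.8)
The plan is to derive all three properties from two facts that are already available: that $\nabla V$ vanishes identically on $\LieGrp$, and that $V$ is right-invariant under $\LieGrp$ by \ref{asm:2}. Parts \ref{lm:1} and (b) will follow from the vanishing of the gradient on $\LieGrp$ together with the self-adjointness of the Hessian, whereas part (c) will come from differentiating the invariance relation twice and evaluating at the identity.

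For part \ref{lm:1} I would exploit that $\LieAlg = T_{\gI}\LieGrp$. Given $\prl{v} \in \LieAlg$, choose a curve $t \mapsto g(t) \in \LieGrp$ with $g(0) = \gI$ and $\dot g(0) = \prl{v}$, for instance $g(t) = \exp(t\,\prl{v})$. Since $\nabla V \equiv 0$ on $\LieGrp$, we have $\nabla V(g(t)) \equiv 0$; differentiating at $t = 0$ and applying the chain rule, $\frac{d}{dt}\big|_{0}\nabla V(g(t)) = \nabla^2 V(\gI)\cdot\prl{v} = 0$, which is the claim. For part (b) I would use that $\nabla^2 V(\gI)$ is a self-adjoint operator on $\Eg$ with respect to the trace inner product, because $V$ is $C^2$ and the Hessian is symmetric. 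Then for any $\prp{v} \in \prp{\LieAlg}$ and any $\prl{w} \in \LieAlg$, self-adjointness and part \ref{lm:1} give $\langle \nabla^2 V(\gI)\cdot\prp{v}, \prl{w}\rangle = \langle \prp{v}, \nabla^2 V(\gI)\cdot\prl{w}\rangle = 0$, so $\nabla^2 V(\gI)\cdot\prp{v}$ is orthogonal to all of $\LieAlg$ and therefore lies in $\prp{\LieAlg}$.

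Part (c) is where the actual work lies. First I would differentiate \ref{asm:2} once in $x$ to obtain a transformation rule for the gradient: writing right multiplication $R_g(x) = xg$ as a linear map, the chain rule gives $\langle \nabla V(xg), wg\rangle = \langle \nabla V(x), w\rangle$ for all $w \in \Eg$. Rewriting the left-hand side with the trace inner product and the cyclic property of the trace yields $\langle \nabla V(xg), wg\rangle = \langle \nabla V(xg)\, g^\top, w\rangle$, so that $\nabla V(xg)\, g^\top = \nabla V(x)$, i.e. $\nabla V(xg) = \nabla V(x)\,(g^{-1})^\top$ for all $x \in \Eg$ and $g \in \LieGrp$. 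Differentiating this identity a second time in $x$ along a direction $v$ and evaluating at $x = \gI$, so that $xg = g$, produces $\nabla^2 V(g)\cdot(vg) = (\nabla^2 V(\gI)\cdot v)(g^{-1})^\top$, which is the assertion. I expect the only delicate step to be the trace manipulation that moves $g$ from the right of $wg$ onto $\nabla V(xg)$ as a factor $g^\top$; keeping the transpose and the inverse in the correct positions is where placement errors would most easily creep in, while both differentiations are routine once the first-order gradient rule is in hand.
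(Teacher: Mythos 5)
Your proposal is correct and follows essentially the same route as the paper: parts (a) and (b) are proved identically (differentiating the identity $\nabla V\equiv 0$ on $\LieGrp$ along $\exp(s\prl{v})$, then using symmetry of $\nabla^2 V(\gI)$ together with part (a)), and part (c) rests on the same right-invariance $V(xg)=V(x)$. The only cosmetic difference is in (c): you first extract the first-order rule $\nabla V(xg)=\nabla V(x)\,(g^{-1})^\top$ and differentiate it once more in $x$, whereas the paper evaluates the mixed second derivative $\tfrac{d}{dt}\tfrac{d}{ds}\big|_{t=s=0}V(g+tvg+sw)$ directly against a test direction $w$; both amount to differentiating the invariance twice, and your trace manipulation placing $g^\top$ on the gradient is carried out correctly.
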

\begin{proof}
\begin{enumerate}
\item Note that \m{\nabla V (g) = 0} for all \m{g \in \LieGrp.} Therefore,
\begin{equation*}
\nabla^2 V(I) \cdot \prl{v} = \left.\frac{d}{ds}\right|_{s=0} \nabla V (\exp(s \prl{v})) = 0.
\end{equation*}

\item For any \m{\prp{v} \in \prp{\LieAlg}},  \m{\nabla^2 V(I)\cdot \prp{v} \in \prp{\LieAlg}} if and only if \m{\ip{\prl{v}}{\nabla^2 V(I)\cdot \prp{v}} =0 } for all \m{\prl{v} \in \LieAlg.} Therefore, using the fact that \m{\nabla^2 V(I)} is symmetric and then applying Lemma \ref{lem:V_property}\ref{lm:1} leads to the following:
\[
\ip{\prl{v}}{\nabla^2 V(I)\cdot\prp{v}} = \ip{\nabla^2 V(I)\cdot \prl{v} }{\prp{v}} = 0  
\]
for all  \m{\prl{v} \in \LieAlg.}
\item For arbitrary vectors \m{w,v \in \Eg}, 
\begin{align*}
\ip{w}{\nabla^2 V(g)\cdot (v g)} & = \left.\frac{d}{dt}\frac{d}{ds}\right|_{t=s=0} V(g+t v g+s w) \\
& = \left.\frac{d}{dt}\frac{d}{ds}\right|_{t=s=0} V(\gI+t v+s w g^{-1})\\
& = \ip{w g^{-1} }{\nabla^2 V(\gI)\cdot v} \\
& = \ip{w }{\bigl(\nabla^2 V(\gI)\cdot v\bigr)(g^{-1})^\top}.
\end{align*}
Therefore, we conclude that
\[
\nabla^2 V(g)\cdot (v g) = \bigl(\nabla^2 V(\gI)\cdot v\bigr) (g^{-1})^\top.
\]
\end{enumerate}
\end{proof}

Employing the properties discussed in Lemma \ref{lem:V_property}, it is straightforward to show that the linearized error dynamics in \eqref{eq:DynErrorLin} is transformed to the following:
\begin{subequations} \label{eq:SplitDyn}
\begin{align}
\prp{\dot{\erg}} &= - \alpha \prp{\Bigl(\bigl(\nabla^2 V(\gI)\cdot\prp{\erg}\bigr) \bigl(\Rtraj{\gst}\Rtraj{\gst}^\top\bigr)^{-1}\Bigr)}, \label{eq:SplitDynPrp}\\
\prl{\dot{\erg}} &= \Rtraj{\gst}\era\Rtraj{\gst}^{-1} - \alpha \prl{\Bigl(\bigl(\nabla^2 V(\gI)\cdot\prp{\erg}\bigr) \bigl(\Rtraj{\gst}\Rtraj{\gst}^\top\bigr)^{-1}\Bigr)}, \label{eq:SplitDynPrl}\\
\dot{\era} &= \frac{\partial f}{\partial \vst} (\Rtraj{\vst},\Rtraj{u}) \era + \frac{\partial f}{\partial u} (\Rtraj{\vst},\Rtraj{u}) \eru, \label{eq:SplitDynXi}
\end{align}
\end{subequations}
where the linear error \m{\erg} in \eqref{eq:DynErrorLin} has simplified and split into the transversal direction error \m{\R \ni t \mapsto \prp{\erg}(t) \in \prp{\LieAlg}} and  the parallel direction error \m{\R\ni t\mapsto \prl{\erg}(t) \in \LieAlg}.   
  
\begin{theorem}\label{thm:perp:dyn:stab}
Under Assumptions \ref{asm:V} and \ref{asm:g0},
the system dynamics \eqref{eq:SplitDynPrp} is exponentially stable to zero.
\end{theorem}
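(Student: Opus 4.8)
The plan is to exhibit a strict, time-independent Lyapunov function adapted to the transversal geometry and to invoke Assumption \ref{asm:g0} to render its decay uniform in time. First I would record that \eqref{eq:SplitDynPrp} is a genuine linear time-varying flow confined to the finite-dimensional subspace \m{\prp{\LieAlg}}: by Lemma \ref{lem:V_property}(b) the operator \m{\nabla^2 V(\gI)} maps \m{\prp{\LieAlg}} into itself, and the outer projection \m{\prp{(\cdot)}} keeps the vector field in \m{\prp{\LieAlg}}, so \m{\prp{\erg}(t)} stays in \m{\prp{\LieAlg}} for all \m{t}. Restricted to \m{\prp{\LieAlg}}, the symmetric operator \m{\nabla^2 V(\gI)} is positive definite by Assumption \ref{asm:V}\ref{asm:3}; writing \m{\lambda_{\min}>0} and \m{\lambda_{\max}>0} for its extreme eigenvalues on \m{\prp{\LieAlg}}, the spectral theorem yields \m{\lambda_{\min}\|v\|^2 \leq \ip{v}{\nabla^2 V(\gI)\cdot v}\leq \lambda_{\max}\|v\|^2} and \m{\|\nabla^2 V(\gI)\cdot v\|\geq \lambda_{\min}\|v\|} for every \m{v\in\prp{\LieAlg}}.

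I would then propose the Hessian-weighted candidate
\begin{equation*}
W(\prp{\erg}) \Let \tfrac12\,\ip{\prp{\erg}}{\nabla^2 V(\gI)\cdot\prp{\erg}},
\end{equation*}
which by the bounds above satisfies \m{\tfrac{\lambda_{\min}}{2}\|\prp{\erg}\|^2 \leq W \leq \tfrac{\lambda_{\max}}{2}\|\prp{\erg}\|^2} and is therefore positive definite on \m{\prp{\LieAlg}}. Differentiating along \eqref{eq:SplitDynPrp} and using that \m{\nabla^2 V(\gI)} is self-adjoint gives \m{\dot W = \ip{\nabla^2 V(\gI)\cdot\prp{\erg}}{\prp{\dot{\erg}}}}. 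Setting \m{Y \Let \nabla^2 V(\gI)\cdot\prp{\erg}\in\prp{\LieAlg}} and \m{P \Let \Rtraj{\gst}\Rtraj{\gst}^\top}, and exploiting that the orthogonal projection \m{\prp{(\cdot)}} is self-adjoint and fixes \m{Y}, the projection in the vector field drops out and I expect to arrive at
\begin{equation*}
\dot W = -\alpha\,\ip{Y}{Y P^{-1}} = -\alpha\,\operatorname{trace}\bigl(Y^\top Y P^{-1}\bigr).
\end{equation*}

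The crux is to turn this into a strict, time-uniform decay estimate despite the right multiplication by \m{P^{-1}}, and this is exactly where Assumption \ref{asm:g0} enters. Since \m{\glb I \mleq P \mleq \gub I}, we have \m{\gub^{-1}I \mleq P^{-1}}, and for the positive semidefinite matrix \m{Y^\top Y} the elementary estimate \m{\operatorname{trace}(Y^\top Y P^{-1})\geq \gub^{-1}\operatorname{trace}(Y^\top Y)} holds, which I would justify by rewriting the difference as \m{\operatorname{trace}(Y(P^{-1}-\gub^{-1}I)Y^\top)\geq 0}. Combined with \m{\operatorname{trace}(Y^\top Y)=\|Y\|^2\geq\lambda_{\min}^2\|\prp{\erg}\|^2}, this yields \m{\dot W \leq -\alpha\gub^{-1}\lambda_{\min}^2\|\prp{\erg}\|^2}. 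Bounding \m{\|\prp{\erg}\|^2\geq \tfrac{2}{\lambda_{\max}}W} then produces the linear differential inequality \m{\dot W \leq -\tfrac{2\alpha\lambda_{\min}^2}{\gub\lambda_{\max}}\,W}, so by the comparison lemma \m{W(t)\leq W(0)\,e^{-2\alpha\lambda_{\min}^2 t/(\gub\lambda_{\max})}}, and the sandwich \m{\tfrac{\lambda_{\min}}{2}\|\prp{\erg}\|^2\leq W} converts this into exponential decay of \m{\|\prp{\erg}(t)\|}.

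The step I expect to be most delicate is the choice of Lyapunov function rather than any single estimate. The naive candidate \m{\tfrac12\|\prp{\erg}\|^2} would give \m{\dot W = -\alpha\,\operatorname{trace}\bigl((\prp{\erg})^\top(\nabla^2 V(\gI)\cdot\prp{\erg})P^{-1}\bigr)}, in which the factor \m{(\prp{\erg})^\top(\nabla^2 V(\gI)\cdot\prp{\erg})} is neither symmetric nor sign-definite, so the uniform bound on \m{P^{-1}} cannot be applied directly. Weighting by the Hessian is precisely what forces the manifestly positive semidefinite block \m{Y^\top Y} to appear and lets Assumption \ref{asm:g0} close the argument cleanly.
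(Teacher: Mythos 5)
Your proposal is correct and follows essentially the same route as the paper: the identical Hessian-weighted Lyapunov function $\tfrac12\ip{\prp{\erg}}{\nabla^2 V(\gI)\cdot\prp{\erg}}$, the same removal of the projection via self-adjointness, and the same use of Assumption \ref{asm:g0} to bound $(\Rtraj{\gst}\Rtraj{\gst}^\top)^{-1}$ below by $\gub^{-1}I$. The only difference is cosmetic: you chain $\|Y\|^2\geq\lambda_{\min}^2\|\prp{\erg}\|^2\geq\tfrac{2\lambda_{\min}^2}{\lambda_{\max}}W$ where the paper uses $\|Y\|^2\geq 2\lambda_{\min}\mathcal{V}$ directly, so your decay rate $\tfrac{2\alpha\lambda_{\min}^2}{\gub\lambda_{\max}}$ is marginally weaker than the paper's $\tfrac{2\alpha\lambda_{\min}}{\gub}$, but both establish the claimed exponential stability.
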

\begin{proof}
Let us define a candidate Lyapunov function 
\[
\prp{\LieAlg} \ni \eta \mapsto \mathcal{V}(\eta) \Let \frac{1}{2} \ip{\nabla^2 V(\gI)\cdot \eta}{\eta} \in \R.
\]
Then, using the properties of \m{V} discussed in Lemma \ref{lem:V_property}, we obtain:  
\begin{align*}
& \frac{d\mathcal{V}}{dt}\bigl(\prp{\erg}\bigr)  = \ip{\frac{\partial \mathcal{V}}{\partial \eta} (\prp{\erg})}{\prp{\dot{\erg}}} \\
& = - \alpha \ip{\nabla^2 V(\gI)\cdot \prp{\erg}}{\prp{\Bigl(\bigl(\nabla^2 V(\gI)\cdot\prp{\erg}\bigr) \bigl(\Rtraj{\gst}\Rtraj{\gst}^\top\bigr)^{-1}\Bigr)}} \\
& = - \alpha \ip{\nabla^2 V(\gI)\cdot \prp{\erg}}{\bigl(\nabla^2 V(\gI)\cdot\prp{\erg}\bigr) \bigl(\Rtraj{\gst}\Rtraj{\gst}^{\top}\bigr)^{-1}} \\
& \leq - \frac{\alpha}{\gub} \ip{\nabla^2 V(\gI)\cdot \prp{\erg}}{\nabla^2 V(\gI)\cdot\prp{\erg}} \\
& \leq - \frac{2 \alpha \lambda_{\text{min}}}{\gub} \mathcal{V}\bigl(\prp{\erg}\bigr),
\end{align*}
where \m{\lambda_{\text{min}}} is the smallest eigenvalue of the operator \m{\nabla^2 V(\gI)} restricted to  $\mathfrak{g}^\perp$.
Therefore, 
\[
\mathcal{V}\big(\prp{\erg}(t)\big) \leq \exp\Big(- \frac{2 \alpha \lambda_{\text{min}}}{\gub} t \Big) \mathcal{V}\big(\prp{\erg}(0)\big),
\]
and by the definition of \m{\mathcal{V}}, we know that 
\[
\lambda_{\text{min}} \norm{\prp{\erg}}^2 \leq 2 \mathcal{V}(\prp{\erg}) \leq \lambda_{\text{max}} \norm{\prp{\erg}}^2, 
\]
where \m{\lambda_{\text{max}}} is the largest eigenvalue of the operator \m{\nabla^2 V(\gI)} restricted to  $\mathfrak{g}^\perp$.
So,
\[ 
\norm{\prp{\erg}(t)} \leq \sqrt{\frac{\lambda_{\text{max}}}{\lambda_{\text{min}}}} \exp\Big(- \frac{ \alpha \lambda_{\text{max}}}{\gub}t\Big) \norm{\prp{\erg}(0)}.
\]
This proves the assertion. 
\end{proof}

\begin{remark}
The proof of Theorem \ref{thm:perp:dyn:stab} uses the boundedness of $g_0(t)\bigl(\Rtraj{\gst}(t)\bigr)^\top$ from above, see Assumption \ref{asm:g0}. 
\end{remark}

The linearized error dynamics \eqref{eq:SplitDyn}  can be exponentially stabilized to zero by feedback. Consequently, the original nonlinear tracking error dynamics \eqref{eq:DynError} is exponentially stabilizable. 

\begin{theorem}\label{thm:Stab}
Suppose Assumptions \ref{asm:V} and \ref{asm:g0} hold.  Then, for any two matrices \m{K_p,K_d \in \Eg} such that the matrix 
\begin{align}\label{eq:ContStabM}
\begin{pmatrix}
0 &  I \\ K_p & K_d
\end{pmatrix}
\end{align}
is \textit{Hurwitz}, the PD-like controller
\begin{align}\label{eq:ContStabControl}
\eru = \Big(\frac{\partial f}{\partial u} (\Rtraj{\vst},\Rtraj{u})\Big)^{-1}\Big\{[\era,\Rtraj{\vst}] + Y  - \frac{\partial f}{\partial \vst} (\Rtraj{\vst},\Rtraj{u})\era \Big\} 
\end{align}
with
\begin{equation}\label{expr:Y}
Y \Let \Rtraj{\gst}^{-1} \big(-\dot{W} + K_p \prl{\erg} + K_d (\Rtraj{\gst} \era \Rtraj{\gst}^{-1} + W ) \big)\Rtraj{\gst}
\end{equation}
and
\[
W \Let  - \alpha \prl{\Bigl(\bigl(\nabla^2 V(\gI)\cdot \prp{\erg} \bigr)\bigl(\Rtraj{\gst}\Rtraj{\gst}^\top\bigr)^{-1}\Bigr)},
\]
stabilizes the controlled dynamics \eqref{eq:SplitDyn} exponentially to zero, where $\dot{W}$  in \eqref{expr:Y} can be expressed in terms of state variables using \eqref{eq:SplitDynPrp} and $\dot{g}_0 = g_0 \xi_0$.  Furthermore, the controller $u = u_0 + \delta u$ exponentially stabilizes \eqref{eq:DynError} to zero.
\end{theorem}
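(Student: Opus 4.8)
The plan is to exploit the cascade structure of the split dynamics \eqref{eq:SplitDyn}: the transversal part \eqref{eq:SplitDynPrp} is autonomous and was already shown to be exponentially stable in Theorem \ref{thm:perp:dyn:stab}, so it suffices to stabilize the parallel part \eqref{eq:SplitDynPrl} together with \eqref{eq:SplitDynXi} by feedback and then argue that the remaining states inherit exponential decay. First I would substitute the controller \eqref{eq:ContStabControl} into \eqref{eq:SplitDynXi}. Because $\frac{\partial f}{\partial u}(\Rtraj{\vst},\Rtraj{u})$ is invertible, the terms $\frac{\partial f}{\partial \vst}(\Rtraj{\vst},\Rtraj{u})\era$ cancel and the closed-loop velocity-error equation collapses to the clean form
\[
\dot{\era} = [\era,\Rtraj{\vst}] + Y .
\]

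The key step is to pass to the velocity-like variable $s \Let \prl{\dot{\erg}} = \Rtraj{\gst}\era\Rtraj{\gst}^{-1} + W$ furnished by \eqref{eq:SplitDynPrl}, and to differentiate it along the closed loop. Using $\tfrac{d}{dt}\Rtraj{\gst} = \Rtraj{\gst}\Rtraj{\vst}$ and $\tfrac{d}{dt}\Rtraj{\gst}^{-1} = -\Rtraj{\vst}\Rtraj{\gst}^{-1}$, the time derivative of the conjugation $\Rtraj{\gst}\era\Rtraj{\gst}^{-1}$ produces the commutator $\Rtraj{\gst}[\Rtraj{\vst},\era]\Rtraj{\gst}^{-1}$, which is exactly annihilated by the bracket $\Rtraj{\gst}[\era,\Rtraj{\vst}]\Rtraj{\gst}^{-1}$ carried inside the closed-loop $\dot{\era}$. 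What survives is $\Rtraj{\gst}\,Y\,\Rtraj{\gst}^{-1} = -\dot{W} + K_p\prl{\erg} + K_d\prl{\dot{\erg}}$ by the definition \eqref{expr:Y} of $Y$, and adding $\dot{W}$ back cancels the $-\dot{W}$ term. This yields the linear time-invariant second-order equation
\[
\prl{\ddot{\erg}} = K_p\,\prl{\erg} + K_d\,\prl{\dot{\erg}} .
\]
Writing this in first-order form for the state $(\prl{\erg},\prl{\dot{\erg}})$ gives the system matrix in \eqref{eq:ContStabM}, which is Hurwitz by hypothesis; hence $(\prl{\erg},\prl{\dot{\erg}})\to 0$ exponentially.

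It then remains to reassemble the full error state. Since $W$ is linear in $\prp{\erg}$ and $(\Rtraj{\gst}\Rtraj{\gst}^\top)^{-1}$ is bounded by Assumption \ref{asm:g0}, Theorem \ref{thm:perp:dyn:stab} gives $W\to 0$ exponentially; combined with $\prl{\dot{\erg}}\to 0$, the relation $\Rtraj{\gst}\era\Rtraj{\gst}^{-1} = \prl{\dot{\erg}} - W$ forces $\Rtraj{\gst}\era\Rtraj{\gst}^{-1}\to 0$ exponentially, and undoing the conjugation with the uniform bounds $\glb I \mleq \Rtraj{\gst}\Rtraj{\gst}^\top \mleq \gub I$ yields $\era\to 0$ exponentially. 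Together with the exponential decay of $\prp{\erg}$ and $\prl{\erg}$, this establishes that \eqref{eq:SplitDyn} is exponentially stabilized to zero.

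Finally, for the nonlinear claim I would invoke the linearization (Lyapunov indirect) principle: \eqref{eq:SplitDyn} is the linearization of \eqref{eq:DynError} at the origin, and the same feedback $u = u_0 + \delta u$ renders that linearization exponentially stable, so the origin of the nonlinear closed loop is locally exponentially stable. The main obstacle I anticipate is precisely here: because \eqref{eq:DynError} is time-varying through the reference trajectory, the indirect method requires \emph{uniform} exponential stability of the linearization and uniform bounds on the higher-order remainder terms. Verifying these relies on Assumption \ref{asm:g0} (to keep the conjugation and $(\Rtraj{\gst}\Rtraj{\gst}^\top)^{-1}$ uniformly bounded) and on sufficient regularity and boundedness of the reference and of the derivatives of $V$; by contrast, the algebraic cancellation in the $\dot{s}$ computation, though delicate in the signs of the commutator and the $\dot{W}$ terms, is routine once the variable $s=\prl{\dot{\erg}}$ has been correctly identified.
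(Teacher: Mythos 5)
Your proposal is correct and follows essentially the same route as the paper: the auxiliary variable $s=\prl{\dot{\erg}}=\Rtraj{\gst}\era\Rtraj{\gst}^{-1}+W$ is exactly the paper's $\zeta$ in \eqref{def:ErAux}'s continuous-time counterpart \eqref{def:ContErAux}, the closed loop reduces to the Hurwitz system \eqref{eq:ContExpStab}, and the recovery of $\era$ and the final appeal to the Lyapunov linearization method match the paper's argument. Your write-up actually supplies more detail than the paper does (the explicit commutator cancellation and the caveat about uniformity of the indirect method for the time-varying system), but the underlying proof is the same.
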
  
\begin{proof}
Since the exponential stability of the transversal dynamics \eqref{eq:SplitDynPrp} has been proved in Theorem \ref{thm:perp:dyn:stab}, the exponential stability of the subsystems \eqref{eq:SplitDynPrl} and \eqref{eq:SplitDynXi} with the control law \eqref{eq:ContStabControl} remains to be proved. Applying the controller \eqref{eq:ContStabControl} to subsystems \eqref{eq:SplitDynXi} transforms the subsystems \eqref{eq:SplitDynPrl} and \eqref{eq:SplitDynXi} to the following system of  differential equations
\begin{equation}
\begin{aligned}\label{eq:ContExpStab}
\begin{pmatrix}
\dot{\prl{\erg}} \\
\dot{\eraa}
\end{pmatrix}
 = 
\begin{pmatrix}
0 & I \\ K_p & K_d
\end{pmatrix}
\begin{pmatrix}
\prl{\erg} \\
\eraa
\end{pmatrix},
\end{aligned}
\end{equation}
where 
\begin{align}\label{def:ContErAux}
\eraa \Let \Rtraj{\gst} \era \Rtraj{\gst}^{-1} - \alpha \prl{\Bigl(\bigl(\nabla^2 V(\gI)\cdot \prp{\erg} \bigr)\bigl(\Rtraj{\gst}\Rtraj{\gst}^\top\bigr)^{-1}\Bigr)}.
\end{align}
Therefore, the linear system \eqref{eq:ContExpStab} is exponentially stable to zero if the matrix \eqref{eq:ContStabM} is {\it Hurwitz} stable.
Since \m{\prp{\erg}} and \m{\eraa} are exponentially stable, it follows from \eqref{def:ContErAux}, Theorem \ref{thm:perp:dyn:stab} and Assumption \ref{asm:g0}  that \m{\era} is also exponentially stable.  By the Lyapunov linearization method, this control law also exponentially stabilizes \eqref{eq:DynError} to zero.
This proves the assertion.  
\end{proof}

	To apply the MPC, let us discretize the linearized error dynamics \eqref{eq:SplitDyn}  using Euler's method as
\begin{subequations}\label{eq:DynDis}
\begin{align}
\dis[k+1]{\prp{\erg}} &= \dis{\prp{\erg}}-\steplength \alpha  \prp{\Bigl(\bigl(\nabla^2 V(\gI)\cdot \prp{\dis{\erg}} \bigr)\bigl(\dRtraj{\gst}\dRtraj{\gst}^\top\bigr)^{-1}\Bigr)}, \label{eq:DynDisPrp}\\ 
\dis[k+1]{\prl{\erg}} &= \dis{\prl{\erg}} + \steplength \dRtraj{\gst} \dis{\era}\dRtraj{\gst}^{-1}\nonumber \\
&\quad - \alpha\steplength \prl{\Bigl(\bigl(\nabla^2 V(\gI)\cdot \prp{\dis{\erg}} \bigr)\bigl(\dRtraj{\gst}\dRtraj{\gst}^\top\bigr)^{-1}\Bigr)},  \label{eq:DynDisPrl}\\
\dis[k+1]{\era} &= \dis{\era} + \steplength  \frac{\partial f}{\partial \vst} (\dRtraj{\vst},\dRtraj{u}) \dis{\era} + \steplength \frac{\partial f}{\partial u} (\dRtraj{\vst},\dRtraj{u})\dis{\eru},\label{eq:DynDisAlg}
\end{align}  
\end{subequations}
where $h$ is  a discretization step, and for a function  \m{\R \ni t \mapsto \Gamma(t) \in \Eg,\; \Gamma_k \Let \Gamma(kh).}  
The stability of the MPC for discrete-time systems requires stabilizability of these discrete-time systems \cite{rawlings93}. Therefore, it is crucial to prove stabilizability of the discrete-time dynamics \eqref{eq:DynDis}. First, we prove the exponentially stability of the subsystem \eqref{eq:DynDisPrp} for an appropriate choice of the discretization step \m{\steplength} and the parameter \m{\alpha}. Then, we proceed to the general case and establish stabilizability of the discrete-time dynamics \eqref{eq:DynDis}. In addition, Theorem \ref{thm:DisStabPrp} establishes a relation between the stabilizing parameter \m{\alpha} and the discretization step length \m{\steplength} that is crucial in implementation of MPC.  
\begin{theorem}\label{thm:DisStabPrp}
Suppose that Assumptions \ref{asm:V} and \ref{asm:g0} hold. Then, the system dynamics \eqref{eq:DynDisPrp} is exponentially stable to zero if the following holds:
\[
0 < \alpha \steplength < \frac{2\lambda_{\text{min}}\glb^2}{\lambda^2_{\text{max}}\gub},
\]
where \m{\lambda_{\text{min}}} and \m{\lambda_{\text{max}}} are the minimum and the maximum eigenvalues of the operator \m{\nabla^2 V(\gI)} restricted to  $\mathfrak{g}^\perp$, respectively. 
\end{theorem}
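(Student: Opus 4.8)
The plan is to reuse the Lyapunov function $\mathcal{V}(\eta)=\tfrac12\ip{\nabla^2 V(\gI)\cdot\eta}{\eta}$ from the proof of Theorem \ref{thm:perp:dyn:stab} and to estimate its increment over one Euler step of \eqref{eq:DynDisPrp}. Abbreviating the update as $\dis[k+1]{\prp{\erg}}=\dis{\prp{\erg}}-\alpha\steplength\, w_k$ with $w_k\Let\prp{\bigl((\nabla^2 V(\gI)\cdot\dis{\prp{\erg}})(\dRtraj{\gst}\dRtraj{\gst}^\top)^{-1}\bigr)}\in\prp{\LieAlg}$ and expanding the quadratic form, I obtain
\[
\mathcal{V}(\dis[k+1]{\prp{\erg}})=\mathcal{V}(\dis{\prp{\erg}})-\alpha\steplength\ip{\nabla^2 V(\gI)\cdot\dis{\prp{\erg}}}{w_k}+\tfrac12(\alpha\steplength)^2\ip{\nabla^2 V(\gI)\cdot w_k}{w_k},
\]
where $\nabla^2 V(\gI)$ is read as its restriction to $\prp{\LieAlg}$, which is legitimate because $\nabla^2 V(\gI)\cdot\dis{\prp{\erg}}\in\prp{\LieAlg}$ by part (b) of Lemma \ref{lem:V_property} and $w_k\in\prp{\LieAlg}$ by construction. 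The genuinely new feature relative to the continuous-time argument is the $O((\alpha\steplength)^2)$ term, the discretization artifact that the negative linear decrement must overcome.

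First I would bound the two inner products using Assumption \ref{asm:g0}, which, with $P\Let(\dRtraj{\gst}\dRtraj{\gst}^\top)^{-1}$, supplies $\tfrac{1}{\gub}I\mleq P\mleq\tfrac{1}{\glb}I$. For the linear term, self-adjointness of the orthogonal projection together with $\nabla^2 V(\gI)\cdot\dis{\prp{\erg}}\in\prp{\LieAlg}$ strips the projection exactly as in Theorem \ref{thm:perp:dyn:stab}, and the trace inequality $\operatorname{trace}(SP)\geq\tfrac{1}{\gub}\operatorname{trace}(S)$ applied to the positive semidefinite Gram matrix $S$ of $\nabla^2 V(\gI)\cdot\dis{\prp{\erg}}$ yields $\ip{\nabla^2 V(\gI)\cdot\dis{\prp{\erg}}}{w_k}\geq\tfrac{1}{\gub}\norm{\nabla^2 V(\gI)\cdot\dis{\prp{\erg}}}^2\geq\tfrac{2\lambda_{\text{min}}}{\gub}\mathcal{V}(\dis{\prp{\erg}})$. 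For the quadratic term, the bound $\nabla^2 V(\gI)\mleq\lambda_{\text{max}}I$ on $\prp{\LieAlg}$, non-expansiveness of the projection, submultiplicativity of the Frobenius norm against the operator-norm bound $\norm{P}\leq1/\glb$, and the eigenvalue estimate $\norm{\nabla^2 V(\gI)\cdot\dis{\prp{\erg}}}^2\leq2\lambda_{\text{max}}\mathcal{V}(\dis{\prp{\erg}})$ give $\ip{\nabla^2 V(\gI)\cdot w_k}{w_k}\leq\tfrac{\lambda_{\text{max}}}{\glb^2}\norm{\nabla^2 V(\gI)\cdot\dis{\prp{\erg}}}^2\leq\tfrac{2\lambda_{\text{max}}^2}{\glb^2}\mathcal{V}(\dis{\prp{\erg}})$.

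Combining the two estimates would produce the contraction
\[
\mathcal{V}(\dis[k+1]{\prp{\erg}})\leq\rho\,\mathcal{V}(\dis{\prp{\erg}}),\qquad\rho\Let1-\frac{2\alpha\steplength\lambda_{\text{min}}}{\gub}+\frac{(\alpha\steplength)^2\lambda_{\text{max}}^2}{\glb^2}.
\]
Reading $\rho$ as an upward parabola in $\alpha\steplength$, the requirement $\rho<1$ is equivalent to $\tfrac{(\alpha\steplength)\lambda_{\text{max}}^2}{\glb^2}<\tfrac{2\lambda_{\text{min}}}{\gub}$, i.e.\ precisely the stated bound $0<\alpha\steplength<\tfrac{2\lambda_{\text{min}}\glb^2}{\lambda_{\text{max}}^2\gub}$; moreover its minimal value $1-\lambda_{\text{min}}^2\glb^2/(\lambda_{\text{max}}^2\gub^2)$ is nonnegative because $\lambda_{\text{min}}\leq\lambda_{\text{max}}$ and $\glb\leq\gub$, so $\rho\in[0,1)$ on the whole admissible interval. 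Iterating then gives $\mathcal{V}(\dis{\prp{\erg}})\leq\rho^k\mathcal{V}(\dis[0]{\prp{\erg}})$, and the norm equivalence $\lambda_{\text{min}}\norm{\eta}^2\leq2\mathcal{V}(\eta)\leq\lambda_{\text{max}}\norm{\eta}^2$ converts this into exponential decay of $\norm{\dis{\prp{\erg}}}$, which establishes the claim.

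The main obstacle is controlling the sign of the $O((\alpha\steplength)^2)$ Euler term: whereas the continuous-time derivative in Theorem \ref{thm:perp:dyn:stab} is strictly negative for every $\alpha>0$, discretization adds a positive quadratic contribution, and exponential stability survives only while the linear decrement dominates it, which is exactly what the upper bound on $\alpha\steplength$ encodes. Recovering the clean threshold quoted in the statement hinges on the specific estimates chosen for this quadratic term—in particular the deliberately crude bound $\norm{\nabla^2 V(\gI)\cdot\dis{\prp{\erg}}}^2\leq2\lambda_{\text{max}}\mathcal{V}(\dis{\prp{\erg}})$—so the one point requiring care is to ensure the inequality one actually derives matches the stated form rather than a sharper but less transparent competitor.
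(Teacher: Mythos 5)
Your proposal is correct and follows essentially the same route as the paper: the same (up to a factor of $\tfrac12$) Lyapunov function $\ip{\nabla^2 V(\gI)\cdot\eta}{\eta}$, the same one-step quadratic expansion, the same bounds $\tfrac{1}{\gub}I\mleq(\dRtraj{\gst}\dRtraj{\gst}^\top)^{-1}\mleq\tfrac{1}{\glb}I$ on the linear and quadratic terms, and the same contraction factor $1-2\alpha\steplength\lambda_{\text{min}}/\gub+\alpha^2\steplength^2\lambda_{\text{max}}^2/\glb^2$. The only cosmetic difference is that you solve the resulting quadratic inequality in $\alpha\steplength$ directly, whereas the paper completes the square before imposing the condition; both yield the identical threshold.
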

\begin{proof}
Let us define a candidate Lyapunov function 
\[
\prp{\LieAlg} \ni \eta \mapsto \mathcal{V}_d(\eta) \Let \ip{\nabla^2 V(\gI)\cdot \eta}{\eta} \in \R.
\]
Then, using the properties of \m{V} from Lemma \ref{lem:V_property} gives
\begin{align*}
& \mathcal{V}_d\bigl(\prp{\dis[k+1]{\erg}}\bigr)  = \ip{\nabla^2 V(\gI) \cdot \prp{\dis[k+1]{\erg}}}{\prp{\dis[k+1]{\erg}}} \\
& = \ip{\nabla^2 V(\gI)\cdot\prp{\dis{\erg}}}{\prp{\dis{\erg}}}\\
& -2 \alpha \steplength \ip{\nabla^2 V(\gI)\cdot \prp{\dis{\erg}}}{\bigl(\nabla^2 V(\gI)\cdot \prp{\dis{\erg}}\bigr)\bigl(\dRtraj{\gst} \dRtraj{\gst}^{\top}\bigr)^{-1}}\\  
& + \alpha^2 \steplength^2 \anorm{\bigl(\nabla^2 V(\gI)\cdot \prp{\dis{\erg}}\bigr)\bigl(\dRtraj{\gst} \dRtraj{\gst}^{\top}\bigr)^{-1}}_{\nabla^2V(\gI)}^{2}\\
& \leq \biggl(1- 2 \alpha \steplength \frac{\lambda_{\text{min}}}{\gub} +  \alpha^2 \steplength^2 \frac{\lambda^2_{\text{max}}}{\glb^2} \biggr)  \mathcal{V}_d\bigl(\prp{\dis{\erg}}\bigr) \\
& \leq \biggl\{ \Bigl(\alpha \steplength \frac{\lambda_{\text{max}}}{\glb}-1\Bigr)^2 + 2 \alpha \steplength \Bigl(\frac{\lambda_{\text{max}}}{\glb} - \frac{\lambda_{\text{min}}}{\gub}   \Bigr)\biggr\} \mathcal{V}_d\bigl(\prp{\dis{\erg}}\bigr). 
\end{align*}
Therefore, the system dynamics \eqref{eq:DynDisPrp} is exponentially stable if 
\[
0 \leq \Bigl(\alpha \steplength \frac{\lambda_{\text{max}}}{\glb}-1\Bigr)^2 + 2 \alpha \steplength \Bigl(\frac{\lambda_{\text{max}}}{\glb} - \frac{\lambda_{\text{min}}}{\gub}   \Bigr) < 1 
\] 
which leads to the following condition: 
\[
0 < \alpha \steplength < \frac{2\lambda_{\text{min}}\glb^2}{\lambda^2_{\text{max}}\gub}.
\]
This proves the assertion. 
\end{proof}

\begin{theorem}\label{thm:DisStab}
Suppose that Assumptions \ref{asm:V} and \ref{asm:g0} hold. Then, for any two matrices \m{K_p,K_d \in \Eg} such that the matrix 
\begin{align}\label{eq:StabMatrix}
\begin{pmatrix}
I & h I \\ K_p & K_d
\end{pmatrix}
\end{align}
is {\it Schur} stable, the controller
\begin{align}\label{eq:StabControl}
\dis{\eru} = \frac{1}{\steplength} \Big(\frac{\partial f}{\partial u} (\dRtraj{\vst},\dRtraj{u})\Big)^{-1}\Big\{ & \dis{Y} - \left (I+h \frac{\partial f}{\partial \vst} (\dRtraj{\vst},\dRtraj{u}) \right )\dis{\era} \Big\} 
\end{align}
where \[
\dis{Y} \Let \dRtraj[,k+1]{\gst}^{-1} \Big(K_p \dis{\prl{\erg}} + K_d\big(\dRtraj{\gst}\dis{\era}\dRtraj{\gst}^{-1} + \dis{W}\big) - \dis[k+1]{W}\Big) \dRtraj[,k+1]{\gst}
\]
and
\[
\dis{W} \Let  - \alpha \prl{\Bigl(\bigl(\nabla^2 V(\gI)\cdot \prp{\dis{\erg}} \bigr)\bigl(\dRtraj{\gst}\dRtraj{\gst}^\top\bigr)^{-1}\Bigr)}, 
\]
stabilizes the controlled dynamics \eqref{eq:DynDis} exponentially to zero. 
\end{theorem}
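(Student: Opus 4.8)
The plan is to follow the continuous-time argument of Theorem~\ref{thm:Stab}, now granting that the discrete transversal subsystem \eqref{eq:DynDisPrp} is already exponentially stable by Theorem~\ref{thm:DisStabPrp} under the step-size condition $0<\alpha\steplength<2\lambda_{\text{min}}\glb^2/(\lambda_{\text{max}}^2\gub)$. It then remains to show that the feedback \eqref{eq:StabControl} drives the coupled pair \eqref{eq:DynDisPrl}--\eqref{eq:DynDisAlg} into the linear recursion governed by the \emph{Schur} matrix \eqref{eq:StabMatrix}, and to transfer the resulting decay back to the original velocity error. To this end I would introduce the discrete auxiliary variable
\[
\dis{\eraa}\Let\dRtraj{\gst}\dis{\era}\dRtraj{\gst}^{-1}+\dis{W},
\]
which is precisely the bracketed combination appearing in \eqref{eq:DynDisPrl}; with it, the parallel equation collapses immediately to $\dis[k+1]{\prl{\erg}}=\dis{\prl{\erg}}+\steplength\,\dis{\eraa}$, reproducing the top block row $(I,\steplength I)$ of \eqref{eq:StabMatrix}.

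Next I would substitute the controller \eqref{eq:StabControl} into the velocity equation \eqref{eq:DynDisAlg}. The prefactor $\frac{1}{\steplength}(\partial f/\partial u)^{-1}$ is chosen so that $\steplength(\partial f/\partial u)\dis{\eru}$ reduces to $\dis{Y}-(I+\steplength\,\partial f/\partial\vst)\dis{\era}$, and the subtracted term $-(I+\steplength\,\partial f/\partial\vst)\dis{\era}$ then cancels the drift $\dis{\era}+\steplength(\partial f/\partial\vst)\dis{\era}$ in \eqref{eq:DynDisAlg}, leaving simply $\dis[k+1]{\era}=\dis{Y}$. Recognizing that $\dRtraj{\gst}\dis{\era}\dRtraj{\gst}^{-1}+\dis{W}=\dis{\eraa}$ inside the definition of $\dis{Y}$ and forming $\dis[k+1]{\eraa}=\dRtraj[,k+1]{\gst}\dis{Y}\dRtraj[,k+1]{\gst}^{-1}+\dis[k+1]{W}$, the outer conjugation by $\dRtraj[,k+1]{\gst}$ undoes the inner conjugation carried in $\dis{Y}$, while the $-\dis[k+1]{W}$ placed inside $\dis{Y}$ cancels the additive $\dis[k+1]{W}$. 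This yields exactly $\dis[k+1]{\eraa}=K_p\dis{\prl{\erg}}+K_d\dis{\eraa}$, the bottom block row, so that $(\dis{\prl{\erg}},\dis{\eraa})$ evolves under the transition matrix \eqref{eq:StabMatrix}, which is exponentially stable by the \emph{Schur} hypothesis.

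Finally I would propagate this decay back to $\dis{\era}$. Inverting the auxiliary definition gives $\dRtraj{\gst}\dis{\era}\dRtraj{\gst}^{-1}=\dis{\eraa}-\dis{W}$, and since $\dis{W}$ is a fixed linear image of $\prp{\dis{\erg}}$ composed with $(\dRtraj{\gst}\dRtraj{\gst}^\top)^{-1}$, Theorem~\ref{thm:DisStabPrp} together with the two-sided bound of Assumption~\ref{asm:g0} shows $\dis{W}\to0$ exponentially; hence $\dis{\eraa}-\dis{W}\to0$ exponentially, and the bounds on $\dRtraj{\gst}$ and $\dRtraj{\gst}^{-1}$ from Assumption~\ref{asm:g0} then yield exponential decay of $\dis{\era}$. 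Combined with the transversal decay of $\prp{\dis{\erg}}$, this establishes exponential stability of the full discrete system \eqref{eq:DynDis}.

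The step I expect to be the main obstacle is the bookkeeping in the $\dis[k+1]{\eraa}$ computation: unlike the continuous case, $\dis{Y}$ involves $W$ evaluated at both indices $k$ and $k+1$, so one must first invoke \eqref{eq:DynDisPrp} to express $\dis[k+1]{W}$ through $\prp{\dis{\erg}}$ (which also confirms that the controller is causal) and then track the two conjugations by $\dRtraj{\gst}$ and $\dRtraj[,k+1]{\gst}$ in the correct order so that every $W$-term and every group factor cancels cleanly.
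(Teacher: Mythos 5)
Your proposal is correct and follows essentially the same route as the paper: the paper's proof introduces exactly the auxiliary variable $\mu_k = g_{0,k}\,\delta\xi_k\,g_{0,k}^{-1} + W_k$, observes that the controller collapses \eqref{eq:DynDisPrl}--\eqref{eq:DynDisAlg} into the linear recursion governed by \eqref{eq:StabMatrix}, and recovers the decay of $\delta\xi_k$ from the boundedness in Assumption \ref{asm:g0} together with Theorem \ref{thm:DisStabPrp}. You simply carry out the cancellation algebra (and note the implicit step-size condition inherited from Theorem \ref{thm:DisStabPrp}) that the paper leaves to the reader.
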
  
\begin{proof}
Since the exponential stability of the transversal dynamics \eqref{eq:DynDisPrp} has been proved in Theorem \ref{thm:DisStabPrp}, the exponential stability of the subsystems \eqref{eq:DynDisPrl} and \eqref{eq:DynDisAlg} with the control law \eqref{eq:StabControl} remains to be proved. Applying the controller \eqref{eq:StabControl} to subsystems \eqref{eq:DynDisAlg} transforms the subsystems \eqref{eq:DynDisPrl} and \eqref{eq:DynDisAlg} to the following system of  difference equations
\begin{equation}
\begin{aligned}\label{eq:ExpStab}
\begin{pmatrix}
\dis[k+1]{\prl{\erg}} \\
\dis[k+1]{\eraa}
\end{pmatrix}
 = 
\begin{pmatrix}
I & h I \\ K_p & K_d
\end{pmatrix}
\begin{pmatrix}
\dis{\prl{\erg}} \\
\dis{\eraa}
\end{pmatrix}
\end{aligned}
\end{equation}
where 
\begin{align}\label{def:ErAux}
\dis{\eraa}\Let \dRtraj{\gst} \dis{\era}\dRtraj{\gst}^{-1} - \alpha \prl{\Bigl(\bigl(\nabla^2 V(\gI)\cdot \prp{\dis{\erg}} \bigr)\bigl(\dRtraj{\gst}\dRtraj{\gst}^\top\bigr)^{-1}\Bigr)}.
\end{align}
Therefore, the linear system \eqref{eq:ExpStab} is exponentially stable to zero if the matrix \eqref{eq:StabMatrix} is {\it Schur} stable.
Since \m{\dis[k]{\prl{\erg}}} and \m{\dis[k]\eraa} are exponentially stable, it follows from \eqref{def:ErAux}, Theorem \ref{thm:DisStabPrp} and Assumption \ref{asm:g0} that \m{\era} is also exponentially stable.
This proves the assertion.  
\end{proof}
Equipped with Theorem \ref{thm:DisStab}, we are in a position to design an MPC control law  for the system dynamics \eqref{eq:DynDis}. MPC computes a static state feedback control law at each time instant by solving a constrained finite horizon discrete-time optimal control problem. A typical optimal control problem for a horizon \m{N} is to minimize a performance objective 
\begin{equation}
\begin{aligned}\label{ocp:cost}
\J(\ero_{0:N},&\erm_{0:N},\eru_{0:N-1})  \Let \sum_{k=0}^{N-1}( \norm{\dis[k]{\ero}}_{Q_{\ero}}^2 + \norm{\dis[k]{\erm}}_{Q_{\erm}}^2) \\
& + \sum_{k=0}^{N-1} \norm{\dis[k]{\eru}}_{Q_{\eru}}^2  +  \norm{\dis[N]{\ero}}_{Q^f_{\ero}}^2 + \norm{\dis[N]{\erm}}_{Q^f_{\erm}}^2, 
\end{aligned}
\end{equation}
where \m{Q_{\ero},Q_{\erm},Q_{\eru},Q_{\ero}^f,Q_{\erm}^f \in \Eg} are positive semidefinite matrices, while satisfying the system dynamics \eqref{eq:DynDis} 
and the state and control constraints
\begin{equation}
\begin{aligned}\label{ocp:constraints}
&\dis{{\ero}} \in \cero \quad \text{for all } k=0,\ldots,N,\\
&\dis{\erm} \in \cerm \quad \text{for all } k=0,\ldots,N,\\
&\dis{\eru} \in \ceru \quad \text{for all } k=0,\ldots,N-1,
\end{aligned}
\end{equation}
where \m{\cero,\cerm} are the admissible state sets and \m{\ceru} is an admissible action set at each time instant \m{k}.

	Concisely, an optimal control \m{\dis[j|j]{\eru}} at the time instant \m{j} for a fixed given state \m{\big(\dis[j|j]{\ero},\dis[j|j]{\erm} \big)} is obtained by solving the following constrained discrete-time optimal control problem:
\begin{equation}\label{ocp:DynDis}
\begin{aligned}
&\minimize_{\{\dis[j+i|j]{\eru}\}_{i=0}^{N-1}} \J \big(\dis[j:j+N|j]{\ero},\dis[j:j+N|j]{\erm},\dis[j:j+N|j]{\eru}\big)\\
&\text{subject to}\\
&\quad\begin{cases}
\text{dynamics } \eqref{eq:DynDis}\;\; \text{ for } k=j|j,\ldots,j+N-1|j,\\
\text{constraints } \eqref{ocp:constraints}\;\; \text{ for } k=j|j,\ldots,j+N-1|j,\\
\big(\dis[j|j]{\ero},\dis[j|j]{\erm}\big) \text{ is fixed}.\\
\end{cases} 
\end{aligned} 
\end{equation}
Then, the control law
\[
u (t)= u_0 (t)+  \dis[j|j]{\eru}
\]
is applied to the system \eqref{eq:DynLie} for $t \in [jh,  (j+1)h[ $, where $j = 0, 1,2, \cdots$. 
\begin{remark}
Note that the system dynamics \eqref{eq:DynDis} is exponentially stabilizable. Therefore, we design an exponentially stabilizing MPC law  for the dynamics \eqref{eq:DynDis} in Euclidean spaces that in turn stabilizes Euler's approximation of the error dynamics \eqref{eq:DynError} exponentially to zero \cite{falcone2008}. However, to the best of the authors' knowledge, the issue of establishing exponential stability of the time-varying sampled data system \eqref{eq:DynError} under the synthesized MPC control law remains open. Some results on the stability of the sampled data systems may be found in \cite{nesic1999,nesic2004}. 
\end{remark}

\section{An Illustrative Example: The Rigid Body Control System}\label{sec:RigidBody}

Let us consider an example of rigid body attitude dynamics to discuss the theory developed in Section \ref{sec:MpcLie}. The rigid body attitude control system is defined by 
\begin{subequations}\label{rigid:original}
\begin{align}
\dot R &= R\hat \Omega, \\
\dot \Omega &= \MI^{-1} ( \MI \Omega \times \Omega) + \MI^{-1} u,
\end{align}
\end{subequations}
where \m{R \in \SO} (the set of \m{3\times3} real orthogonal matrices with determinant 1) is a rotation matrix that determines the attitude of the rigid body, \m{\Omega \in  \R^3}  defines the angular velocity of the rigid body; $u \in \R^3$ is the control torque;  $\mathbb I$ is the moment of inertial matrix of the rigid body; and  the hat map $\wedge$ maps  $\mathbb R^3$ vectors to $3\times 3 $ real skew symmetric matrices such that $\hat x y = x \times y$ for all $x,y \in \mathbb R^3$ with $\times$ as the vector product on $\R^3$. 

 Note that the manifold \m{ \SO \times \R^3 \subset \R^{3\times3}\times\R^3 } is an invariant set of the system dynamics \eqref{rigid:original}. To stably embed the system dynamics \eqref{rigid:original} into \m{\R^{3\times3}\times \R^3}, let us consider a function 
\begin{align}\label{eq:V}
W\times\R^3 \ni (X,\Omega) \mapsto  V(X,\Omega) \Let \frac{1}{4}\|X^\top X - I\|^2 \in \R,
\end{align}
where $W \Let \{ X \in \R^{3\times 3} \mid \det X >0\}$. Then, $V^{-1}(0) = \SO \times \mathbb R^3$ and 
\[
\nabla_X V(X,\Omega) = X(X^\top X - I), \quad \nabla_\Omega V(X,\Omega) = 0.
\]
It is easy to show that $V$ satisfies Assumption \ref{asm:V}, which will actually be proven in the proof of Theorem \ref{thm:rigidPrp}.
With the help of the function \m{V}, the system dynamics \eqref{rigid:original} is extended to the Euclidean space \m{\R^{3 \times 3} \times \R^3} as defined in \eqref{eq:DynEg} to be    
\begin{subequations}\label{rigid:tilde:eq}
\begin{align}
\dot X &= X\hat \Omega - \alpha X(X^TX - I), \label{R:s:eq}\\
\dot \Omega &= \MI^{-1} ( \MI \Omega \times \Omega) + \MI^{-1} u,
\label{Omega:s:eq}
\end{align}
\end{subequations}
where $(X,\Omega) \in \R^{3\times 3} \times \R^3$.

Take a reference trajectory 
\begin{align}\label{rigid:Ref}
\R \ni t \mapsto (R_0(t), \Omega_0 (t)) \in \SO \times \R^3
\end{align}
and the corresponding control signal $\R \in t \mapsto u_0(t)\in \R^3$ such that the trajectory obeys the system dynamics \eqref{rigid:original}. It is trivial to show that $R_0(t)$ satisfies Assumption \ref{asm:g0}.
Define the error trajectory as
\begin{align*}
\R \ni t & \mapsto \bigl(\ero(t), \erm(t)\bigr) \\
& \Let \bigl(X(t) R_0^{-1}(t) - I, \Omega(t) - \Omega_0(t)\bigr) \in \R^{3\times3}\times\R^3 
\end{align*}
such that \m{\ero = 0, \erm = 0} ensures that the system dynamics follows the reference trajectory. The linearized error dynamics along the reference state-control trajectory   $(R_0, \Omega_0,u_0) \in \SO \times \R^3\times\R^3$ is therefore given by
\begin{subequations}\label{rigid:linear}
\begin{align}\label{eq:LinearOrnt}
\dot \ero  &= R_0  \hat \erm  R_0^{-1} -2 \alpha \text{Sym} (\ero), \\
\dot \erm  &= \MI^{-1} ( \MI \erm \times \Omega_0+\MI  \Omega_0 \times  \erm) + \MI^{-1} \eru,
\end{align}
\end{subequations}
where \m{\text{Sym} (\ero)} is the symmetric component of the matrix \m{\ero} and $\eru(t)\Let u(t)-u_0(t)$.
Now we are in the position to split the error dynamics \eqref{eq:LinearOrnt} into the parallel and the transversal direction. The parallel direction is given by the Lie algebra \m{\so} (the set of \m{3 \times 3} skew symmetric matrices) of the Lie group \m{\SO} and the transversal direction is given by the perpendicular space \m{\prp{\so}} to the Lie algebra \m{\so} in \m{\R^{3\times3}} under the Euclidean norm, i.e., the set of \m{3 \times 3} symmetric matrices. 
Consequently, the attitude error dynamics \eqref{eq:LinearOrnt} is split into the parallel and the transversal direction, simplifying \eqref{rigid:linear} to  
\begin{subequations}\label{eq:LinearSplit}
\begin{align}\label{eq:OrntPrp}
\prp{\dot{\ero}} &= - 2 \alpha \prp{\ero},\\ \label{eq:OrntPrl} 
\prl{\dot{\ero}} &= R_0 \hat \erm R^{-1}_0,\\
\dot \erm  &= \MI^{-1} ( \MI \erm \times \Omega_0+\MI  \Omega_0 \times  \erm) + \MI^{-1} \eru,
\end{align}
\end{subequations}
where \m{\R \ni t \mapsto \prp{\ero}(t) \in \prp{\so}} and \m{\R \ni t \mapsto \prl{\ero}(t) \in \so}.  
\begin{remark}
It is worth noting that the parallel error dynamics \eqref{eq:OrntPrl} and the transversal error dynamics \eqref{eq:OrntPrp} are decoupled. Therefore, in the absence of a drift vector field, i.e., \m{\alpha=0}, the initial error in the transversal direction cannot be mitigated and that leads to a steady-state error in the transversal direction. In other words, for \m{\alpha=0}, the linearized error dynamics \eqref{eq:OrntPrp} cannot be stabilized to zero. 
\end{remark}

The discretized dynamics of \eqref{eq:LinearSplit}, by Euler's method, is given by
\begin{subequations}\label{rigid:linearDT}
\begin{align}
\dis[k+1]{\prp{\ero}} &= \dis{\prp{\ero}} - 2 \steplength \alpha \prp{\dis{\ero}}, \label{rigid:linearDT:perp}\\  
\dis[k+1]{\prl{\ero}} & = \dis{\prl{\ero}} +\steplength R_{0,k} \dis{\hat \erm} R^{-1}_{0,k}, \label{rigid:linearDT:prl}\\
\dis[k+1]{\erm}  &= \dis{\erm} + \steplength \MI^{-1} ( \MI \dis{\erm} \times \Omega_{0,k} + \MI  \Omega_{0,k} \times  \dis{\erm}) + \MI^{-1} \dis{\eru}, \label{rigid:linearDT:LieAlg}
\end{align}
\end{subequations}
where $h$ is the sampling time.
The following theorem proves exponential stability of \eqref{rigid:linearDT:perp}:
\begin{theorem}\label{thm:rigidPrp}
The transversal error dynamics \eqref{rigid:linearDT:perp} is exponentially stable if 
\begin{equation}\label{h:cond}
0< \alpha \steplength <1.
\end{equation}
 \begin{proof}
We employ Theorem \ref{thm:DisStabPrp} to establish the stability of the dynamics \eqref{rigid:linearDT:perp}. It is easy to prove that the function \m{V} in \eqref{eq:V} satisfies \m{V(XR)=V(X)} for all \m{X \in \R^{3\times3},} \m{R \in \SO} and \m{\nabla^2 V(I)(\prp{X},\prp{X}) = 2 \ip{\prp{X}}{\prp{X}}} for all \m{\prp{X} \in \prp{\so}}. Therefore the minimum eigenvalue \m{\lambda_{\text{min}}} and the maximum eigenvalue \m{\lambda_{\text{max}}} of the operator \m{\nabla^2V(I)} are equal to 2. Further, using the fact that \m{R^\top_0(t) R_0(t)= I} for all \m{t} leads to  \m{\glb=\gub=1}.   
Applying Theorem \ref{thm:DisStabPrp} to the dynamics \eqref{rigid:linearDT:perp}, we obtain \eqref{h:cond}.
\end{proof} 
\end{theorem}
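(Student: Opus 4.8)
The plan is to reduce the claim to a direct invocation of Theorem \ref{thm:DisStabPrp}, whose hypothesis requires \m{0 < \alpha h < 2\lambda_{\text{min}}\glb^2/(\lambda^2_{\text{max}}\gub)}. So the entire task is to check that the embedding function \m{V} of \eqref{eq:V} satisfies Assumption \ref{asm:V}, and then to pin down the four constants \m{\lambda_{\text{min}}}, \m{\lambda_{\text{max}}}, \m{\glb}, \m{\gub} in this concrete setting; once these are identified the stated threshold \eqref{h:cond} should fall out by arithmetic.

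First I would verify Assumption \ref{asm:V} for \m{V(X,\Omega) = \tfrac14\norm{X^\top X - I}^2}. Property (A-i), that \m{V^{-1}(0)=\SO\times\R^3}, is immediate since \m{X^\top X = I} together with \m{\det X>0} characterizes \m{\SO}. For the left-invariance (A-ii) I would write \m{V(XR,\Omega)=\tfrac14\norm{R^\top(X^\top X - I)R}^2} for \m{R\in\SO} and then use invariance of the Frobenius norm under orthogonal conjugation to conclude \m{V(XR,\Omega)=V(X,\Omega)}. Property (A-iii) reduces to the transversal Hessian, which I address in the next step.

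The one genuine computation is \m{\nabla^2 V(I)}. Differentiating the gradient \m{\nabla_X V(X)=X(X^\top X - I)} along \m{X=I+sY} and retaining the term linear in \m{s} gives \m{\nabla^2 V(I)\cdot Y = Y + Y^\top}. Restricted to the transversal space \m{\prp{\so}} of symmetric matrices this acts as multiplication by \m{2}, so that \m{\nabla^2 V(I)\cdot\prp{X}=2\prp{X}} and \m{\ip{\nabla^2 V(I)\cdot\prp{X}}{\prp{X}}=2\norm{\prp{X}}^2>0}. This simultaneously establishes (A-iii) and shows that the operator is a multiple of the identity on \m{\prp{\so}}, whence \m{\lambda_{\text{min}}=\lambda_{\text{max}}=2}.

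For the geometric bounds, since the reference attitude obeys \m{R_0(t)\in\SO} we have \m{R_0(t)R_0(t)^\top = I} for all \m{t}, so Assumption \ref{asm:g0} holds with \m{\glb=\gub=1}. Substituting \m{\lambda_{\text{min}}=\lambda_{\text{max}}=2} and \m{\glb=\gub=1} into the threshold of Theorem \ref{thm:DisStabPrp} yields \m{2\cdot 2\cdot 1/(4\cdot 1)=1}, i.e.\ precisely \eqref{h:cond}. I do not expect a real obstacle: the argument is a verification of hypotheses rather than a fresh estimate, and the only step carrying any content is the short Hessian computation, whose restriction to \m{\prp{\so}} collapses the two eigenvalues to a single value and thereby makes the bound numerically clean.
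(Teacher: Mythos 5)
Your proposal is correct and follows essentially the same route as the paper: verify Assumption \ref{asm:V} for the given $V$, compute $\nabla^2 V(I)$ restricted to the symmetric matrices to get $\lambda_{\text{min}}=\lambda_{\text{max}}=2$, use $R_0R_0^\top=I$ to get $\glb=\gub=1$, and substitute into the bound of Theorem \ref{thm:DisStabPrp}. You merely spell out the Hessian computation and the verification of (A-i)--(A-iii) that the paper leaves as ``easy to prove.''
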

\begin{remark}
In an identical manner, one can prove exponential stabilizability of the system dynamics \eqref{rigid:linearDT} by applying Theorem \ref{thm:DisStab}. 
\end{remark}
\subsection{Model predictive control design}

In this section, we design a model predictive tracking control of the discrete-time attitude control dynamics \eqref{rigid:linearDT}. 
Notice that the transversal dynamics \eqref{rigid:linearDT:perp} in \eqref{rigid:linearDT} is decoupled from \eqref{rigid:linearDT:prl} and \eqref{rigid:linearDT:LieAlg} and exponentially stable (see Theorem \ref{thm:rigidPrp}). Therefore, it is advantageous to choose \m{Q_{\ero},Q_{\ero}^f} in \eqref{ocp:cost} which decouples the cost \eqref{ocp:cost} along the parallel and the transversal direction, i.e.,
\m{\norm{\dis[N]{\ero}}_{Q^f_{\ero}}^2 = \norm{\dis[N]{\prl{\ero}}}_{Q^f_{\ero}}^2 +  \norm{\dis[N]{\prp{\ero}}}_{Q^f_{\ero}}^2 } and \m{\norm{\dis[i]{\ero}}_{Q_{\ero}}^2 =  \norm{\dis[i]{\prp{\ero}}}_{Q_{\ero}}^2 +  \norm{\dis[i]{\prl{\ero}}}_{Q_{\ero}}^2} for all \m{k=0,\ldots,N-1,} so that we can ignore the transversal dynamics as it is not influencing the optimization problem. 
 
	Consequently, an \m{N} horizon optimal control problem \eqref{ocp:DynDis} at a given time instant \m{k} for the system dynamics \eqref{rigid:linearDT} with the performance objective \eqref{ocp:cost} and constraints \eqref{ocp:constraints} is given by
\begin{equation}\label{rigid:ocp}
\begin{aligned}
&\minimize_{\{\dis[k+i|k]{\eru}\}_{i=0}^{N-1}} \J \big(\dis[k:k+N|k]{\prl{\ero}},\dis[k:k+N|k]{\erm},\dis[k:k+N|k]{\eru}\big)\\
&\text{subject to}\\ 
&\quad
\begin{cases}
\begin{cases} 
\dis[k+i+1|k]{\prl{\ero}} = \dis[k+i|k]{\prl{\ero}} +\steplength R_{0,k+i|k} \dis[k+i|k]{\hat \erm} R^{-1}_{0,k+i|k}\\
\dis[k+i+1|k]{\erm} = \dis[k+i|k]{\erm} + \steplength \MI^{-1} ( \MI \dis[k+i|k]{\erm} \times \Omega_{0,k+i|k})\\
+ \steplength \MI^{-1} (\MI  \Omega_{0,k+i|k} \times  \dis[k+i|k]{\erm}) + \MI^{-1} \dis[k+i|k]{\eru}\\
\dis[k+i|k]{\eru} \in \ceru[k+i] 
\end{cases}\\ 
\text{for all }\; i=0,\ldots,N-1, \\
\begin{cases}
\dis[k+i|k]{\prl{\ero}} \in \cero[k+i]\\ 
\dis[k+i|k]{\erm} \in \cerm[k+i] 
\end{cases}
\quad \text{for all }\; i=1,\ldots,N, \\
\big(\dis[k|k]{\prl{\ero}},\dis[k|k]{\erm}\big) = \big(\dis[k]{\prl{\ero}},\dis[k]{\erm} \big)
\end{cases} 
\end{aligned} 
\end{equation}
where \m{\big(\dis[k]{\prl{\ero}},\dis[k]{\erm} \big)} is fixed, and $\cero[k+i]$ and $\cerm[k+i] $ are admissible sets for $\dis[k+i|k]{\prl{\ero}} $  and $\dis[k+i|k]{\erm}$, respectively. The quadratic program \eqref{rigid:ocp} can be solved in MATLAB using an optimization modeling toolbox YALMIP \cite{yalmip}. A detailed exposition of computational complexity of real-time MPC exists in \cite{richter,rubagotti,Valentin}. The optimal control problem \eqref{rigid:ocp} is solved at each time instant \m{k} and the control 
\[
u \Let \Rtraj{u} + \eru,
\]
where \m{\eru \Let \dis[k|k]{\eru}} for \m{[kh,(k+1)h{[},} is applied to the system as shown in Figure \ref{fig_MPC}.   

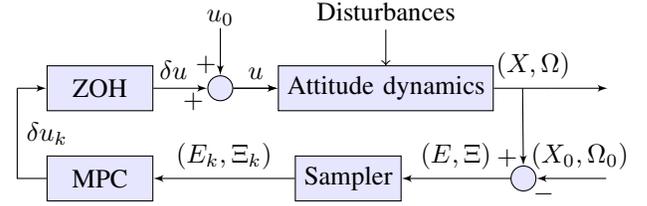
\begin{figure}[htb]
\centering
\tikzstyle{block} = [draw, fill=blue!10, rectangle, 
    minimum height=1.7em, minimum width=4em]
\tikzstyle{sum} = [draw, fill=blue!10, circle, node distance=0.8cm]
\tikzstyle{input} = [coordinate]
\tikzstyle{output} = [coordinate]
\tikzstyle{pinstyle} = [pin edge={to-,thin,black}]

\begin{tikzpicture}[auto, node distance=1.6cm,>=latex']
    \node [input, name=control_input] {};
    \node [sum, below of=control_input] (control_sum) {};
    \node [block, left of=control_sum] (zoh) {ZOH};
    \node [block, right of=control_sum, , pin={[pinstyle]above:Disturbances},
           node distance=2.2cm] (system) {Attitude dynamics};
    \node [block, below of=system,node distance=1.2cm,xshift=-0.5cm] (sampler) {Sampler};
    \node [block, left of=sampler, node distance=3.3cm] (controller) {MPC};
    \node [output, right of=system, node distance=2.2cm] (output) {};
    \node [sum, right of=sampler, node distance=2.33cm ] (state_sum) {};
    \node [input, right of =state_sum](state_input) {};
    \node [input, left of =zoh, node distance=1.1cm](dummy_input) {};
    \node [input, right of=state_sum,node distance=1.1cm](state_input) {};
    \node [input, right of=output,node distance=0.75cm](dummy_output) {};
    \draw [<-] (controller) --node{\m{(\dis{\ero},\dis{\erm})}} (sampler); 
    \draw [->] (zoh)node[above right,xshift=19pt]{\m{\eru}} -- node[below,yshift=2pt,xshift=5pt]{$+$}(control_sum);
    \draw [->] (control_sum) -- node[name=u] {$u$} (system);
    \draw [<-] (sampler) -- node{\m{(\ero,\erm)}} (state_sum);
    \draw [->] (state_input) node[above,xshift=-10pt]{$(\Rtraj{X},\Rtraj{\Omega})$} -- node[below left, yshift=1pt,xshift=-3pt]{$-$}(state_sum);
    \draw [->] (output) -- node[above left]{$(X,\Omega)$}(dummy_output);
    \draw [draw,->] (control_input) node[yshift=4pt]{$\Rtraj{u}$}--node[left,xshift=2pt,yshift=-5pt]{$+$}(control_sum);
    \draw [->] (control_sum) -- (system);
    \draw [-] (system) -- node [name=y]{} (output);
    \draw [->] (y) -- node[below left, yshift=-5pt,xshift=1.5pt]{$+$}(state_sum);
    \draw [-] (controller) -| node [right,near end] {$\dis{\eru}$} (dummy_input);
    \draw [->] (dummy_input) -- (zoh);
\end{tikzpicture}
     \caption{The sampled-data closed-loop system with MPC.}\label{fig_MPC}
 \end{figure}

\section{Simulation Results}\label{sec:Simulations}
We simulate our MPC in a sampled-data system as shown in Figure \ref{fig_MPC}.
The finite dimensional quadratic programming problem \eqref{rigid:ocp} is solved at each time instant \m{k} to calculate a feedback control law. The moment of inertia matrix of the rigid body in \eqref{rigid:ocp} is   
\[ \MI=\text{diag}(4.250, 4.337, 3.664),\] 
which was taken from a satellite from the European Student Earth Orbiter (ESEO) \cite{Hegrenas}.
Let us track a reference trajectory 
\[
\R \ni t \mapsto (R_0(t), \Omega_0 (t)) \in \SO \times \R^3,
\]
where
\[
R_0(t) \Let \exp\left(t \hat{e_1} \right)\exp\left(t \hat{e_2} \right)\exp\left(t \hat{e_3} \right) 
\]
with $e_i$ as the unit vector along the $i$th axis, and  
\[
\Omega_0 (t) \Let  \left(1+\cos t,  \sin t - \sin t \cos t,  \cos t + \sin^2 t\right)^\top,
\]
using the MPC control law with the corresponding reference control signal 
\[
\R \ni t \mapsto u_0(t) = \mathbb I \dot \Omega_0 (t) - (\mathbb I \Omega_0 (t))\times \Omega_0 (t) \in \R^3.
\]

	The initial data and parameters considered for the optimization \eqref{rigid:ocp} are the following:
\begin{itemize}
\item \m{\dis[0]{\prl{\ero}} = \prl{\big(R_0(0.2)-R_0(0)\big)}, \quad \dis[0]{\erm} = (0,0,0)^\top,}
\item \m{Q_{\ero}=100 \gI, \quad Q_{\erm}=10 \gI, \quad Q_{\eru}= 0.01 \gI},
\item \m{Q^f_{\ero} = 100 \gI, \quad Q^f_{\erm} = 10 \gI},
\item  sampling time: \m{h = 0.2} \si{\sec},
\item MPC time horizon: \m{N = 4}.
\end{itemize}
The MPC controller takes the tracking error measurements \m{\big(\dis{\ero},\dis{\erm}\big)} for computing the feedback control \m{\dis{\eru}} at the each iteration \m{k}. These tracking error measurements are calculated as a difference of the reference states \m{\big(R_{0,k},\Omega_{0,k} \big)} and the states obtained from the ODE simulation (we have used the MATLAB integrator, ode45, with the options, $RelTol = AbsTol = 10^{-6}$) of the rigid body dynamics \eqref{rigid:original} . In turn, the ODE simulation is driven by the zero-order hold control actions generated by the MPC controller, and that forms the closed-loop MPC system; see Figure \ref{fig_MPC}.
We simulate three different scenarios as follows:

\subsection{Case 1:  Loose constraint and no noise}\label{subsec:case1} 
In the first case study, we consider the following state and control constraints \eqref{ocp:constraints}: 
 \[ 
\cero=\so,\quad \cerm=\R^{3},\quad \ceru= {U} - \dRtraj{u} \quad \text{for each } k, 
 \]
where \m{U \Let \{y \in \R^3 | -10 \leq y_i \leq 10 \; \text{ for } i = 1,2,3 \} }.
The closed-loop system with the designed MPC shows a successful tracking performance as the error trajectory \m{(\ero (t),\erm (t))} tends to zero quickly (see Figure \ref{figure:Error1}), and the optimal control profile obeys the control constraints as shown in  Figure \ref{figure:Control1}. It is worth noting that the angular velocity error $ \erm $ shown in Figure \ref{figure:Error1} increases initially from zero in order to mitigate the initial orientation error $\ero.$   
 
\begin{figure}[htb]
 \centering
\psfrag{Time}[c][c][1][0]{Time [\si{\sec}]}
 \includegraphics[width=1\columnwidth]{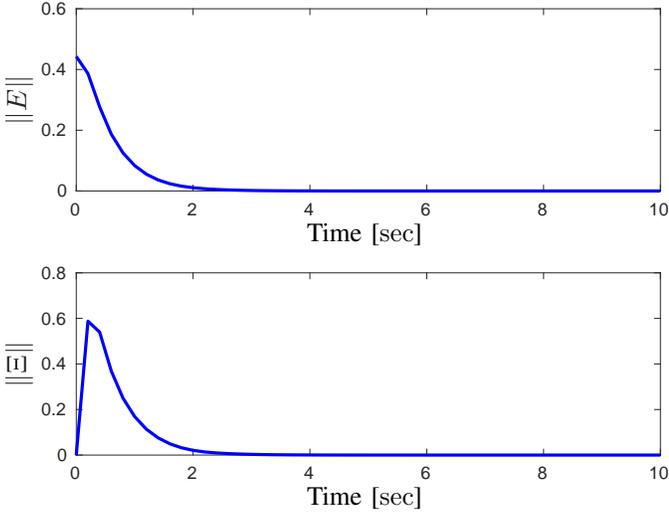}
 \caption{ The tracking errors  for Case 1. }
\label{figure:Error1}
\end{figure}

\begin{figure}[htb]
 \centering
\psfrag{Time}[c][c][1][0]{Time [\si{\sec}]}
\psfrag{Time }[c][c][1][0]{Time [\si{\sec}]}
\includegraphics[width=1\columnwidth]{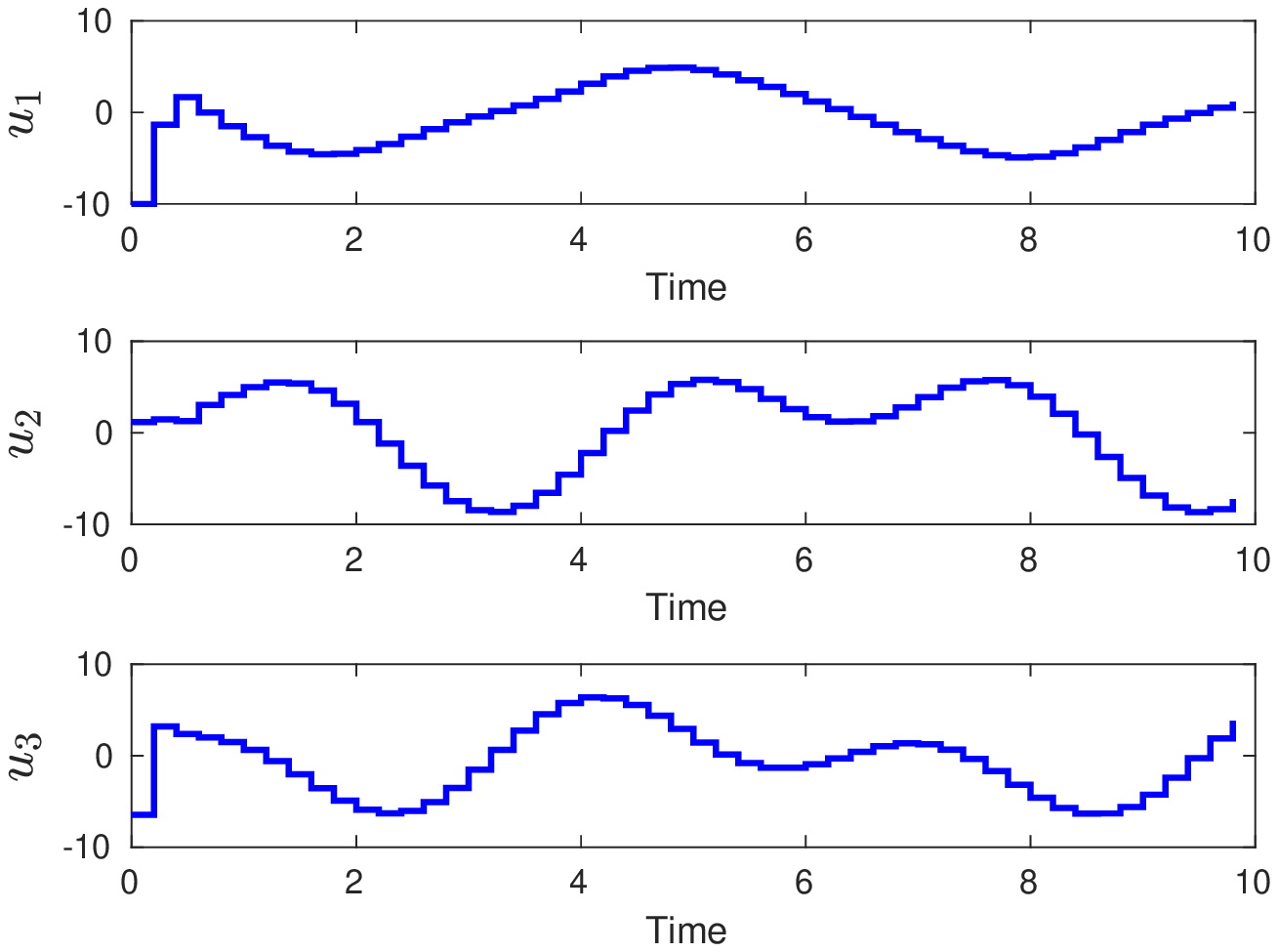}
 \caption{ Zero-order hold control for the sampled data system with MPC for Case 1. }
\label{figure:Control1}
\end{figure}

\subsection{Case 2: Tight control constraint} 
The second case we study is the one with the following tight constraints:
 \[ 
\cero=\so,\quad \cerm=\R^{3},\quad \ceru= {U} - \dRtraj{u} \quad \text{for each } k, 
 \]
where \m{{U} \Let \{y \in \R^3 | -6 \leq y_i \leq 6 \; \text{ for } i = 1,2,3 \} }.
The closed-loop system with the designed MPC considering a tight control bound  shows a compromised tracking performance. As the control trajectory hits the control bounds (see Figure \ref{figure:Control2}), the error trajectory \m{(\ero (t),\erm (t))} deviates from zero, as shown in Figure \ref{figure:Error2}.
 
 \begin{figure}[htb]
  \centering
\psfrag{Time}[c][c][1][0]{Time [\si{\sec}]}
    \includegraphics[width=1\columnwidth]{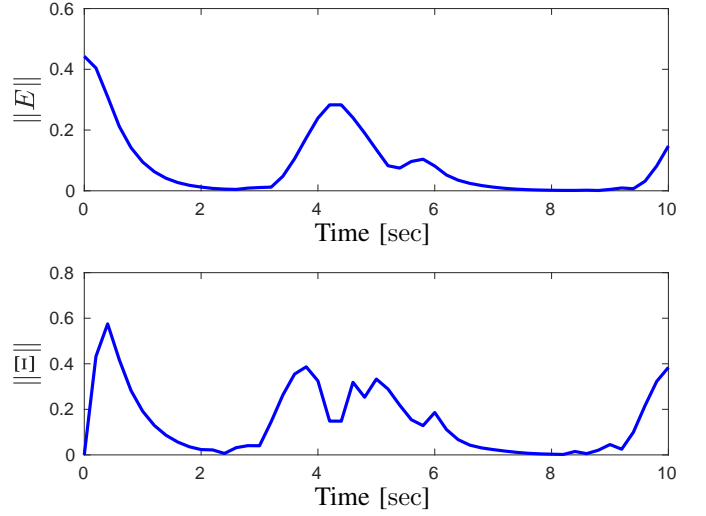}
\caption{The tracking errors for Case 2.}
    \label{figure:Error2}
\end{figure}
 
 \begin{figure}[htb]
  \centering
\psfrag{Time}[c][c][1][0]{Time [\si{\sec}]}
    \includegraphics[width=1\columnwidth]{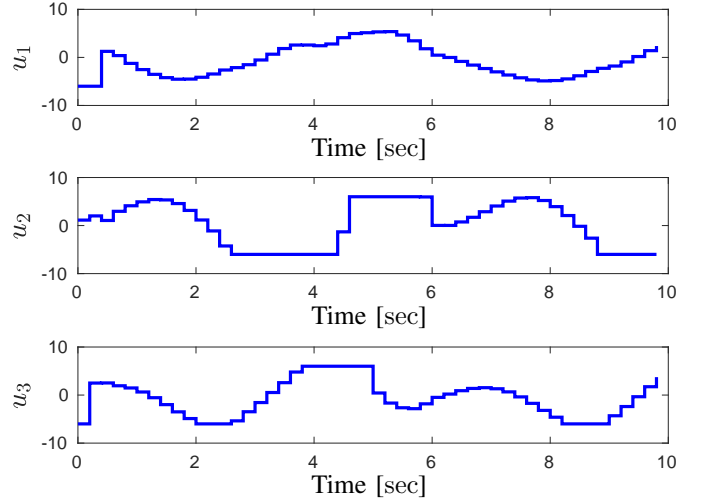}
    \caption{ Zero-order hold control for the sampled data system with MPC  for Case 2.}
\label{figure:Control2}
 \end{figure}

\subsection{Case 3: Noisy measurements} 
In this case study, the state and control constraints are considered as in Case~1 in Section \ref{subsec:case1}; however, a measurement noise in the tracking error  
\[
\big(\dis[k+i+1|k]{\prl{\ero}}, \dis[k+i+1|k]{\erm}\big) \in \so \times \R^3 \quad \text{for} \quad i=1,\ldots,N, 
\]
is realized by the independent and identically distributed (i.i.d.) random variables, i.e.,  $w \sim \mathcal{N}(0, \sigma_w^2),$ where $\sigma_w=0.03$.
Due to the noisy state measurements, the error trajectory \m{(\ero (t),\erm (t))} fluctuates around zero instead of stabilizing at zero; see Figure \ref{figure:Error3}. However, the tracking performance of the closed loop system is similar to Case 1, as shown in Figure \ref{figure:Error3} and Figure \ref{figure:Control3}.

\begin{figure}[htb]
  \centering
\psfrag{Time}[c][c][1][0]{Time [\si{\sec}]}
    \includegraphics[width=1\columnwidth]{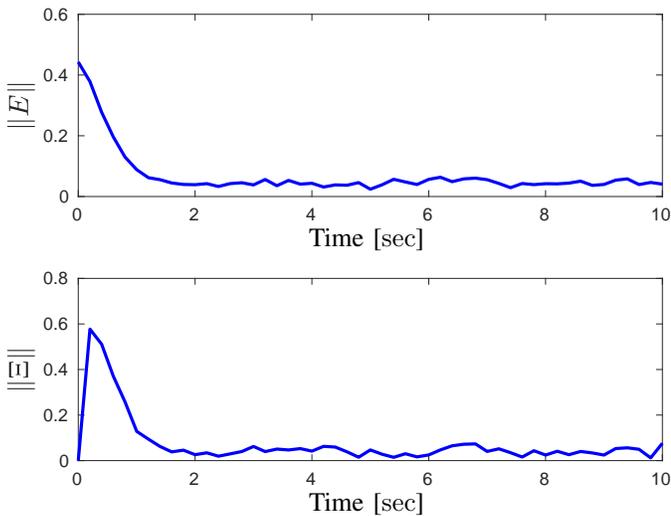}
    \caption{The tracking errors for Case 3.}
    \label{figure:Error3}
\end{figure}

\begin{figure}[htb]
  \centering
\psfrag{Time}[c][c][1][0]{Time [\si{\sec}]}
    \includegraphics[width=1\columnwidth]{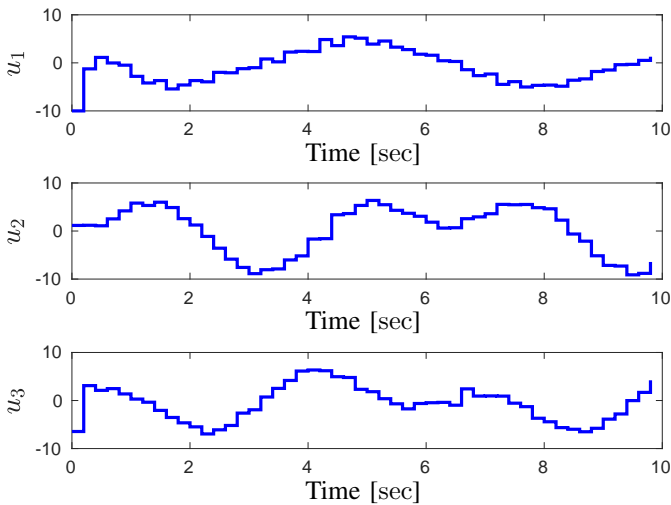}
    \caption{ Zero-order hold control for the sampled data system with MPC  for Case 3. }
\label{figure:Control3}
 \end{figure}

\section{Conclusion}\label{sec:Conclusion}
In this paper, we have presented a technique  to design model predictive tracking controllers for control systems evolving on manifolds in Euclidean spaces. We have applied the proposed technique to the systems on matrix Lie groups and demonstrated its potency by designing a linear MPC law for the rigid body attitude dynamics. 
Our approach simplifies MPC design for control systems on manifolds. This development could be quite useful for control engineers in dealing with nonlinear mechanical control system applications in practice.

\bibliographystyle{IEEEtran}
\bibliography{TAC_MPC_on_Manifolds}

\end{document}